\newtheorem{mainthm}{Theorem}
\newtheorem{maincor}[mainthm]{Corollary}
\newtheorem{mainconj}[mainthm]{Conjecture}
\newtheorem{mainprop}[mainthm]{Proposition}
\newcommand\Mod{\operatorname{Mod}}
\newcommand\PSL{\operatorname{PSL}}
\newcommand\Rat{{\mathbf{Rat}}}
\newcommand\Pone{{\mathbb P^1}}
\newcommand\Def{{\mathcal D\!\mathit{ef}}}
\newcommand\Homeo{{\operatorname{Homeo}}}
\begin{document}
\title{Connectedness of a space of branched coverings with a periodic cycle}
\author{Laurent Bartholdi}
\address{Fachrichtung Mathematik+Informatik, Universität des Saarlandes}
\email{laurent.bartholdi@gmail.com}
\date{April 13th, 2022}
\begin{abstract}
  We prove the connectedness of the following locus: the space of degree-$d$ branched self-coverings of $S^2$ with two critical points of order $d$, one of which is $n$-periodic.  
\end{abstract}
\maketitle

\section{Introduction}
Consider the space $\mathscr M_d$ of degree-$d$ branched self-coverings of the sphere $S^2$. Much is known about its topology~\cite{segal:rational}, for example it is connected, and its fundamental group is $\mathbb Z/2d$. Interesting subspaces arise by imposing dynamical conditions; the one we will focus on in this article is
\[\mathscr P_{d,n}=\{f\colon S^2\righttoleftarrow: f\text{ has two critical points of order $d$, one of which has period exactly }n\}.\]
Our main result is:
\begin{mainthm}\label{thm:main}
  The space $\mathscr P_{d,n}$ is path connected.
\end{mainthm}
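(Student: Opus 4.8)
The plan is to join an arbitrary $f\in\mathscr P_{d,n}$ to a single normal form by an explicit path, exploiting first the rigidity of $f$ away from its critical set and then the connectivity of the ambient homeomorphism group. The structural input is that both critical points have the maximal local degree $d$, so Riemann--Hurwitz, $2=2d-\sum_c(e_c-1)$, is saturated: $c_0,c_1$ are the only critical points, the critical values $v_0=f(c_0)$ and $v_1=f(c_1)$ are distinct, and $f^{-1}(v_i)=\{c_i\}$. Consequently $f$ restricts to an unramified degree-$d$ self-covering of the annulus $S^2\setminus\{c_0,c_1\}\to S^2\setminus\{v_0,v_1\}$, which is the standard $d$-th power covering. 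Thus, up to isotopy, $f$ is encoded by finite combinatorial data: the periodic cycle $O_0=\{c_0,f(c_0),\dots,f^{n-1}(c_0)\}$, the position of the second critical point $c_1$ and of its forward orbit relative to the $d$ sheets, and one integer twisting parameter for the annular covering. First I would invoke the path-connectivity of $\Homeo^+(S^2)$: since $f$ and $hfh^{-1}$ lie in a common path component whenever $h$ is in the identity component, I may normalise the positions of $O_0$, of $c_1$, and of $v_1$ at will, and I am reduced to moving between the finitely many combinatorial types and to killing the twisting parameter.

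Next I would deal with the combinatorial portrait, i.e.\ the way the forward orbit of $c_1$ is threaded through the cyclically ordered sheets and interleaved with the cycle $O_0$. Since configuration spaces of the sphere are path connected, sliding $c_1$ and its iterated images along paths, and conjugating $f$ by the corresponding ambient isotopies, joins any portrait to any other of the same type; the only care needed is at the degenerate loci where a point of the orbit of $c_1$ collides with the cycle (in particular the case $c_1\in O_0$, where both critical points become periodic), which are codimension-one walls that a generic path crosses transversally and which must be crossed within $\mathscr P_{d,n}$. This reduces the theorem to a single assertion: that the integer twisting parameter does not separate path components.

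The hard part, and the crux of the argument, is exactly this last assertion, because the twist is the one genuinely discrete invariant and it is entangled with the rigid requirement that $c_0$ stay \emph{exactly} $n$-periodic. Conjugating $f$ by an ambient isotopy realising a Dehn twist along a core curve encircling $O_0$ only shifts the twisting parameter in steps governed by the degree $d$ (the core curve in the target pulls back through the $d$-fold cover with a factor of $d$), so ambient conjugation alone need not reach every value. The resolution I would pursue is to drag the second critical point once around the periodic cycle: the point-pushing isotopy $\{\phi_t\}$ that carries $c_1$ along a loop encircling $O_0$ and back can be taken to fix $O_0$ pointwise throughout, so $f_t=\phi_t f\phi_t^{-1}$ is a path inside $\mathscr P_{d,n}$ that preserves the total ramification and the exact period $n$ while changing the twisting parameter. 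Combined with the ambient twists, these moves change the twisting parameter by amounts generating all of $\mathbb Z$, collapsing every twisting value --- and hence, by the previous paragraph, every combinatorial type --- into one path component, which proves that $\mathscr P_{d,n}$ is path connected.
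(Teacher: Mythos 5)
Your opening reduction is sound and matches the first steps of the paper: Riemann--Hurwitz forces $c_0,c_1$ to be the only critical points, their critical values are distinct, and the restriction to the twice‐punctured sphere is the unique connected degree-$d$ cover, so all maps in $\mathscr P_{d,n}$ lie in a single orbit under pre- and post-composition by arbitrary mapping classes (the Hurwitz step). You also correctly sense that the remaining moves are conjugations and point-pushes. The genuine gap is the central assertion that, after normalising positions, the only residual discrete invariant is ``one integer twisting parameter.'' That is where the whole difficulty of the theorem is hidden, and it is not correct as stated. Because the allowed deformations must preserve the \emph{exact} $n$-periodicity of $c_0$, the entire cycle $A$ must be treated as marked, and the set $W$ of isotopy classes of such coverings rel $A\cup\{c_1\}$ and $A\cup\{v_1\}$ is a free $\Mod(C)$-set with $d^{n-2}$ orbits, where $\Mod(C)$ is the pure mapping class group of an $(n+1)$-marked sphere --- a large nonabelian group, not $\mathbb Z$ plus finite data. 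The group that acts is not all of $\Mod(B)\times\Mod(C)$ but only the equalizer $E=\{(g,h):\varepsilon_B(g)=\varepsilon_C(h)\}$ of the two restrictions to $\Mod(A)$, and connectedness is equivalent to transitivity of $E$ on $W$. Nothing in your argument identifies the obstruction to this transitivity with a single integer, and no such identification is available without essentially doing the computation the paper does.

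Relatedly, even granting a reduction to one discrete parameter, your last step --- that ambient Dehn-twist conjugation shifts it in steps governed by $d$ while dragging $c_1$ around the cycle shifts it by something else, and that together these generate $\mathbb Z$ --- is pure assertion: no lift is ever computed, and no coprimality is verified. This is precisely the content of Lemma~\ref{lem} and Corollary~\ref{cor:conj}: one fixes a polynomial representative (so $B=C$), presents the biset of $f$ explicitly, verifies identities such as $\tau_{1,\infty}^{\tau_{i+1,\infty}}\cdot f=f\cdot\tau_{i,\infty}\tau_{i,n}\tau_{n,\infty}$ and $\tau_{i+1,j+1}\cdot f=f\cdot\tau_{i,j}$, and then runs an induction showing that every Dehn-twist generator $\tau_{i,j}$ of $\Mod(B)$ lies in the set of classes $m$ with $m\cdot f^{\Mod(B)}\subseteq P\cdot f^{\Mod(B)}$, where $P$ is the group of point pushes of the critical value acting by post-composition (not by conjugation, as in your proposal). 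To turn your sketch into a proof you would need (i) to replace the ``twisting parameter'' heuristic by the actual description of $W$ and the transitivity statement for $E$, and (ii) to carry out the lifting computations showing that the conjugates and point-pushes you invoke really do suffice.
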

An equivalent statement is that any two maps in $\mathscr P_{d,n}$ are isotopic through maps within $\mathscr P_{d,n}$. In fact, the homotopy type of $\mathscr P_{d,n}$ can be determined quite explicitly (see Proposition~\ref{prop:homotopy}).

This purely topological statement has a complex analytic avatar: endow $S^2$ with its complex structure, now written $\Pone$. In this manner, $\Rat_d=\{f\in\C(z):\deg(f)=d\}$ embeds into $\mathscr M_d$. The locus $\mathscr P_{d,n}\cap\Rat_d$ is an important ``slice'' of parameter space, whose connectedness was asked by Milnor~\cite{milnor:quadratic}. In fact, the group of M\"obius transformations acts on $\Rat_d$ by conjugation, preserving the locus  $\mathscr P_{d,n}\cap\Rat_d$, so this question may be studied in the quotient space. Milnor proves that $\{f\in\Rat_d:f\text{ has two critical points of order }d\}/\PSL_2(\C)$ is isomorphic to $\C^2$, for instance by identifying the map $(\alpha z^d+\beta)/(\gamma z^d+\delta)$ with the pair $(\beta\gamma/(\alpha\delta-\beta\gamma),(\alpha^{d+1}\beta^{d-1}+\gamma^{d-1}\delta^{d+1})/(\alpha\delta-\beta\gamma)^{2d})$. Then the image of $\mathscr P_{d,n}\cap\Rat_d$ in $\C^2$ is an algebraic curve called $\mathbf{Per}_n^d(0)$, whose connectedness is a tantalizing open problem. Theorem~\ref{thm:main} should be seen as a solution to this problem in the topological context; since by~\cite{segal:rational} the inclusion of $\Rat_d$ in $\mathscr M_d$ is a homotopy equivalence up to dimension $d$, one may similarly hope:
\begin{mainconj}
  The inclusion $\mathscr P_{d,n}\cap\Rat_d\hookrightarrow\mathscr P_{d,n}$ is a homotopy equivalence up to dimension $d$.
\end{mainconj}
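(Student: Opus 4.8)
The conjecture asserts that the inclusion $\mathscr P_{d,n}\cap\Rat_d\hookrightarrow\mathscr P_{d,n}$ is a homotopy equivalence through degree $d$, and the natural plan is to prove it by analogy with Segal's theorem, attacking it homologically. Since Proposition~\ref{prop:homotopy} already determines the homotopy type of the topological space $\mathscr P_{d,n}$, its homology is known, so the problem reduces to computing the homology of the rational locus $\mathscr P_{d,n}\cap\Rat_d$ through degree $d$ and checking that the inclusion realises the predicted isomorphism. This is the precise analogue, for the bicritical marked-periodic-cycle locus, of Segal's statement that $\Rat_d\hookrightarrow\mathscr M_d$ is a homology isomorphism in a range growing with $d$, and I would model the argument on his.

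Concretely, Segal's proof builds a scanning map identifying a space of degree-$d$ maps with a labelled configuration space of its zeros and poles, and then invokes a group-completion argument to force the homology isomorphism. I would try to refine this construction so as to respect the two structures that define $\mathscr P_{d,n}$: the two critical points of order $d$, and the marked $n$-periodic cycle. Here Milnor's normal form $(\alpha z^d+\beta)/(\gamma z^d+\delta)$ is the key bookkeeping device, since it presents the moduli of the rational locus as the algebraic curve $\mathbf{Per}_n^d(0)\subset\C^2$ and makes the $n$-periodicity explicit as the vanishing of a period-$n$ multiplier polynomial. The goal would be to translate this algebraic condition into a combinatorial constraint on the labelled configurations, so that a constrained scanning map computes the low-degree homology directly and compares it, by naturality, with that of $\mathscr P_{d,n}$.

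The crux --- and the reason this is posed as a conjecture rather than a theorem --- is that no such homological control of $\mathbf{Per}_n^d(0)$ is presently available: even its connectedness over $\C$, that is $H_0$, is an open problem, so one cannot merely quote a stability theorem but must establish one. I therefore expect the scanning and naturality formalism to be essentially routine once the target homotopy type of Proposition~\ref{prop:homotopy} is fixed, and the genuine difficulty to lie entirely in producing an explicit cell structure or stratification of $\mathbf{Per}_n^d(0)$ --- perhaps indexed by the combinatorics of the critical orbit, or obtained by a degeneration letting $d$ or $n$ grow --- from which its homology through degree $d$, and the effect of the inclusion, can be read off. In short, the formal machinery is a matter of analogy, but the hard, genuinely new content is exactly the geometry of $\mathbf{Per}_n^d(0)$ that the unconstrained Segal theorem does not see.
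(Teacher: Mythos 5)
There is nothing in the paper to compare your attempt against: the statement you are addressing is stated by the author as a \emph{conjecture} (``one may similarly hope\dots''), and the paper offers no proof of it. Its main theorem only settles the $\pi_0$ statement on the topological side, namely that $\mathscr P_{d,n}$ itself is path connected; the connectedness of the rational locus, equivalently of the curve $\mathbf{Per}_n^d(0)$, is explicitly left open. So the honest assessment of your proposal is that it is a research programme, not a proof, and you concede as much yourself: the entire content of the conjecture is concentrated in the step you defer, namely obtaining homological (indeed $H_0$) control of $\mathscr P_{d,n}\cap\Rat_d$. Nothing in the scanning/group-completion formalism you outline produces that control; Segal's argument identifies the unconstrained space $\Rat_d$ with a labelled configuration space precisely because membership in $\Rat_d$ imposes no dynamical condition, and it is exactly the dynamical constraint (the $n$-periodicity of a critical point, i.e.\ the vanishing locus of the multiplier polynomial) that breaks the scanning construction. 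There is no known way to ``translate this algebraic condition into a combinatorial constraint on the labelled configurations,'' and asserting that such a translation exists is assuming the conjecture rather than proving it.

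Two further technical points would need attention even if the homological input were available. First, the conjecture asserts a \emph{homotopy} equivalence up to dimension $d$, while your strategy is homological; since by Proposition~\ref{prop:homotopy} the space $\mathscr P_{d,n}$ has a large, non-abelian fundamental group (a subgroup of $\pi_1(\mathscr Q)$, hence of a surface-braid-type group), a homology isomorphism in a range does not upgrade to a homotopy equivalence without separately controlling $\pi_1$ and passing to universal covers. Second, you conflate $\mathscr P_{d,n}\cap\Rat_d$ with its image $\mathbf{Per}_n^d(0)$ in the $\PSL_2(\C)$-quotient; these differ by the (free, up to finite stabilizers) M\"obius action, and the comparison with $\mathscr P_{d,n}$ must be made at the level of the unquotiented space. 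Neither point is fatal to the outline, but both would have to be addressed; the essential gap remains the absence of any handle on the topology of the periodic-cycle locus itself.
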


\subsection{Spaces of marked maps}
The general setting is a structure $\Pi=\langle A,B,C,F,\deg\rangle$ consisting of sets $A,B,C$ with $A\subset B$ and $A\subset C$, and maps $F\colon C\to B$ and $\deg\colon C\to\{1,2,\dots\}$. The corresponding space of marked maps is
\begin{multline*}
  \mathscr P_\Pi\coloneqq\{(f,b,c):b\colon B\hookrightarrow S^2,c\colon C\hookrightarrow S^2,f\colon(S^2,c(C))\to(S^2,b(B))\text{ branched covering},\\
  f\circ c=b\circ F,\;\deg_{c(x)}f=\deg(x),\;b\restriction A=c\restriction A,\;f\text{ ramifies only above }b(B)\}.
\end{multline*}
For example, setting $A=\{a_0,\dots,a_{n-1}\}$ and $B=A\cup\{v\}$ and $C=A\cup\{c\}$ with $\deg(a_0)=\deg(c)=d$ and $F(a_i)=a_{i+1\bmod n}$ and $F(c)=b$ specifies precisely the space $\mathscr P_{d,n}$ introduced above. The method of this article should serve to solve a variety of connectedness problems about loci $\mathscr P_\Pi$.

Thurston's theory of iteration of rational maps involves considering their topological counterparts. A branched covering $f$ is \emph{critically finite} if the forward orbit $P_f$ of its critical points is finite. Setting $A=B=C=P_f$ and $F=f\restriction P_f$ recovers the space of all maps with same post-critical behaviour as $f$, and his fundamental result implies that the connected component of $f$ is contractible and contains at most one holomorphic representative (with a combinatorial criterion to determine whether there is one), unless the map is double covered by a homothety on the torus. At the other extreme, $A=\emptyset$ amounts to considering branched coverings with no dynamical constraint, and subsumes the classical Hurwitz theory of coverings of surfaces~\cite{hurwitz:ramifiedsurfaces}.

By abuse of notation, we will think of $B,C$ both as abstract sets (used to define $\Pi$) and as variable subsets of $S^2$; and abbreviate
\[\mathscr P_\Pi=\{f\colon(S^2,C)\to(S^2,B):f\restriction A=F\restriction A,\deg_f=\deg\}.\]

Kahn, Firsova and Selinger~\cite{firsova-kahn-selinger:deformation} consider, following Mary Rees~\cite{rees:tr}, a space of maps modeled on Teichm\"uller space, which we can express as follows in our setting. Recall that the Teichm\"uller space of a marked sphere consists of all its complex structures; for $B\subset S^2$ we may define
\[\mathcal T_B=\{\tau\colon S^2\to\Pone\text{ homeomorphism}\}/{\sim},
\]
with $m\circ\tau\circ h\sim\tau$ for every M\"obius transformation $m$ and every homeomorphism $h$ of $(S^2,B)$ that is isotopic to the identity rel $B$; henceforth written $h\in\Homeo_0(S^2,B)$. Then \emph{Rees space} is
\[\mathscr R_\Pi=(\mathcal T_B\times\mathscr P_\Pi)/{\sim},
\]
with $(\tau\circ h,f)\sim(\tau,h\circ f)\sim(\tau,f\circ h')$ whenever $h\colon(S^2,B)\righttoleftarrow,h'\colon(S^2,C)\righttoleftarrow$ satisfy $h\circ f\simeq f\circ h'$ rel $C$ and $h\simeq h'$ rel $A$. Every choice of $f\in\mathscr P_\Pi$ yields a map $\mathcal T_B\to\mathscr R_\Pi$, which is a covering on its image, the group of deck transformations being the centralizer $\{h\in\mathbf{Mod}_B:h\circ f=f\circ h\text{ rel }A\}$ of $f$. Theorem~\ref{thm:main} can be rephrased as the statement
\begin{maincor}
  For $\Pi$ marking two order-$d$ critical points one of which in $n$-periodic, the space $\mathscr R_\Pi$ is a quotient of Teichm\"uller space $\mathcal T_B$ by a subgroup of $\Mod(B)$, so in particular is a $K(\pi,1)$ space for some $\pi\le\Mod(B)$.
\end{maincor}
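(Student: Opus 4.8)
The plan is to deduce the corollary from Theorem~\ref{thm:main} by combining the covering property of the maps $\iota_f\colon\mathcal T_B\to\mathscr R_\Pi$, $\tau\mapsto[(\tau,f)]$, recorded above with the contractibility of Teichmüller space. Fix once and for all a basepoint $f\in\mathscr P_\Pi$. Two facts are needed: first, that $\mathcal T_B$ is contractible, which is classical (and elementary here, $B$ being a finite subset of $S^2$); second, and this is where Theorem~\ref{thm:main} enters, that $\iota_f$ is surjective. Granting these, since $\mathcal T_B$ is simply connected the covering $\iota_f\colon\mathcal T_B\to\mathscr R_\Pi$ is the universal cover; it is therefore regular, its deck group is $\pi_1(\mathscr R_\Pi)$, and by the description recalled above this group is the centralizer $Z_f=\{h\in\Mod(B):h\circ f=f\circ h\text{ rel }A\}\le\Mod(B)$. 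Thus $\mathscr R_\Pi\cong\mathcal T_B/Z_f$, and as the quotient of a contractible space by a group acting freely and properly discontinuously it is a $K(Z_f,1)$.

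To prove surjectivity I would compute the saturation of the slice $\mathcal T_B\times\{f\}$ under the equivalence relation defining $\mathscr R_\Pi$. Unwinding the relations $(\tau\circ h,f)\sim(\tau,h\circ f)\sim(\tau,f\circ h')$, a pair $(\sigma,g)$ is equivalent to some $(\tau,f)$ precisely when $g$ lies in the orbit $\mathcal O_f=\{h\circ f\circ h':h\in\Homeo(S^2,B),\,h'\in\Homeo(S^2,C)\text{ compatible}\}$, and in that case every $\sigma\in\mathcal T_B$ occurs; hence the image of $\iota_f$ is all of $\mathscr R_\Pi$ as soon as $\mathcal O_f=\mathscr P_\Pi$. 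Now Theorem~\ref{thm:main} asserts that $\mathscr P_\Pi=\mathscr P_{d,n}$ is path connected, so any $g\in\mathscr P_\Pi$ is joined to $f$ by a path of branched coverings; the isotopy extension theorem upgrades such a path to ambient isotopies $H_t,H_t'$ with $H_0=H_0'=\mathrm{id}$ and $g=H_1\circ f\circ (H_1')^{-1}$, exhibiting $g=h\circ f\circ h'\in\mathcal O_f$ for $h=H_1$ and $h'=(H_1')^{-1}$. Therefore $\mathcal O_f=\mathscr P_\Pi$ and $\iota_f$ is onto.

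Equivalently, and perhaps more cleanly, one observes that path components of $\mathscr P_\Pi$ are open (these mapping spaces are locally path connected), so the image of $\iota_f$—being the $\sim$-image of $\mathcal T_B$ times a union of path components—is open and closed in $\mathscr R_\Pi$; since $\mathscr R_\Pi$ is a quotient of the connected space $\mathcal T_B\times\mathscr P_\Pi$ it is connected, forcing the image to be everything. I expect the genuine content, and the only real obstacle, to be Theorem~\ref{thm:main} itself; the corollary is a formal consequence, the one point requiring care being the passage from topological path connectedness of $\mathscr P_\Pi$ to the algebraic identity $\mathcal O_f=\mathscr P_\Pi$, that is, the clean application of isotopy extension in the branched-covering setting.
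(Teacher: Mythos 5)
Your proposal is correct and follows essentially the same route as the paper, which presents the corollary as an immediate rephrasing of Theorem~\ref{thm:main} via the covering $\mathcal T_B\to\mathscr R_\Pi$ with deck group the centralizer of $f$: surjectivity of that map is exactly the transitivity of $E$ on $W$, which the paper's lemma in \S3 identifies with path connectedness of $\mathscr P_\Pi$ (your isotopy-extension step is precisely the content of that lemma), and contractibility of $\mathcal T_B$ then gives the $K(\pi,1)$ statement.
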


There is a fibration $\mathscr P_\Pi\to\mathscr Q$, the space of configurations of $B\cup_A C$ in $S^2$. Let $\mathscr W$ denote a fibre, thought of as a space of branched coverings with specified marked points. The group $\Homeo_0(S^2,C)$ is contractible and acts freely on $\mathscr W$ by pre-composition, with discrete quotient. From the long exact sequence of homotopy groups,
\begin{mainprop}\label{prop:homotopy}
  The space $\mathscr W$ is homotopy equivalent to a discrete set. The space $\mathscr P_{d,n}$ is path connected; $\pi_1(\mathscr P_{d,n})\cong\ker(\pi_1(\mathscr Q)\twoheadrightarrow\pi_0(\mathscr W))$; and $\pi_q(\mathscr P_{d,n})\cong\pi_q(\mathscr Q)$ for all $q>1$. Furthermore, $\pi_1(\mathscr Q)$ is naturally identified with the equalizer of the two natural (forgetful) maps $\Mod(B)\to\Mod(A),\Mod(C)\to\Mod(A)$, and $\pi_0(\mathscr W)$ is naturally identified with $\Mod(C)\times\{1,\dots,d^{n-2}$.\qed
\end{mainprop}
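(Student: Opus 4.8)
The plan is to read off every assertion from the homotopy exact sequence of the fibration $\mathscr W\to\mathscr P_\Pi\to\mathscr Q$, so the first task is to pin down the fibre. I would begin by proving that $\mathscr W$ is homotopy equivalent to a discrete set. By hypothesis the contractible group $G=\Homeo_0(S^2,C)$ acts freely on $\mathscr W$ by precomposition, with discrete quotient; since $G$ is path connected each orbit is a full path component, and freeness makes the orbit map $G\to G\cdot f$ a homotopy equivalence onto that component. Formally, $\mathscr W\to\mathscr W/G$ is a principal $G$-bundle over a discrete base, hence trivial over each point, so $\pi_q(\mathscr W)=\pi_q(G)=0$ for $q\ge1$ and $\mathscr W\simeq\pi_0(\mathscr W)=\mathscr W/G$.

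Granting this, the exact sequence does the rest. For $q\ge2$ both $\pi_q(\mathscr W)$ and $\pi_{q-1}(\mathscr W)$ vanish, giving $\pi_q(\mathscr P_{d,n})\cong\pi_q(\mathscr Q)$. At the bottom the sequence reads $0=\pi_1(\mathscr W)\to\pi_1(\mathscr P_{d,n})\to\pi_1(\mathscr Q)\xrightarrow{\partial}\pi_0(\mathscr W)\to\pi_0(\mathscr P_{d,n})\to\pi_0(\mathscr Q)$, exact as pointed sets, where $\partial$ is the monodromy map $\gamma\mapsto\gamma\cdot f_0$ of the action of $\pi_1(\mathscr Q)$ on the fibre. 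Exactness at $\pi_1(\mathscr Q)$ identifies $\pi_1(\mathscr P_{d,n})$ with the stabiliser $\ker\partial$. Since $\mathscr Q$ is a configuration space on $S^2$ it is connected, so path connectedness of $\mathscr P_{d,n}$ is equivalent to transitivity of the $\pi_1(\mathscr Q)$-action on $\pi_0(\mathscr W)$, i.e.\ to surjectivity of $\partial$; this is exactly the content of Theorem~\ref{thm:main}, which I take as proved, and it is what licenses writing $\ker(\pi_1(\mathscr Q)\twoheadrightarrow\pi_0(\mathscr W))$.

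Next I would identify $\pi_1(\mathscr Q)$. Writing $\mathscr Q$ as the fibre product of the configuration spaces of $B$ and of $C$ over that of $A$, the Fadell--Neuwirth forgetful maps $\mathrm{Conf}_B\to\mathrm{Conf}_A$ and $\mathrm{Conf}_C\to\mathrm{Conf}_A$ are fibrations, and $\mathscr Q\to\mathrm{Conf}_A$ is their pullback, a fibration with fibre a product of punctured spheres. Because $\mathrm{Conf}_A(S^2)$ has vanishing $\pi_2$ once $|A|\ge3$, comparing the three exact sequences yields $\pi_1(\mathscr Q)=\pi_1(\mathrm{Conf}_B)\times_{\pi_1(\mathrm{Conf}_A)}\pi_1(\mathrm{Conf}_C)$. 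Finally I identify each $\pi_1(\mathrm{Conf}_X)$ with $\Mod(X)$ using the contractibility of $\Homeo_0(S^2,X)$ (up to the central $\mathbb{Z}/2$ from $\pi_1\mathrm{SO}(3)$, absorbed by the $\Mod$-conventions), whereupon the fibre product becomes precisely the equalizer of the two forgetful maps to $\Mod(A)$.

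The substantial computation is $\pi_0(\mathscr W)\cong\Mod(C)\times\{1,\dots,d^{n-2}\}$. Riemann--Hurwitz forces the two order-$d$ critical points to account for all ramification, so every $f\in\mathscr W$ is, as a branched cover, topologically the power map $z\mapsto z^d$, branched over its two critical values. The group $\Mod(C)=\Homeo(S^2,C)/\Homeo_0(S^2,C)$ acts freely on $\pi_0(\mathscr W)=\mathscr W/\Homeo_0(S^2,C)$ by precomposition, and the quotient $\pi_0(\mathscr W)/\Mod(C)$ is the finite set of topological types of such $f$ carrying its marked periodic orbit $a_0\to\cdots\to a_{n-1}\to a_0$. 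Encoding a type by the sheet through which each of the $n-1$ non-critical orbit points passes realizes this set as a quotient of $(\mathbb{Z}/d)^{\,n-1}$ (the periodic critical point $a_0$ is pinned, being the total ramification over $f(a_0)$); the residual identification is the diagonal $\mathbb{Z}/d$ of simultaneous relabellings of the $d$ sheets, realized by the deck transformation of the cover and thereby absorbed into the $\Mod(C)$ factor, leaving exactly $d^{n-2}$ types. I expect the main obstacle to be the honest verification of this count: that the $\Mod(C)$-action on the sheet data is precisely this diagonal $\mathbb{Z}/d$ and no larger, and that $\Mod(C)$ acts freely, with trivial symmetry group of $f$ rel $C$. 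The homotopy-theoretic bookkeeping of the earlier paragraphs is then routine once $\mathscr W\simeq\pi_0(\mathscr W)$ is in hand.
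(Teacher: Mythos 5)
Your argument follows the paper's proof exactly: the fibration $\mathscr P_\Pi\to\mathscr Q$ with fibre $\mathscr W$, the free action of the contractible group $\Homeo_0(S^2,C)$ with discrete quotient, and the long exact sequence of the fibration, with path-connectedness deferred to Theorem~\ref{thm:main} just as the paper itself does (``modulo the main connectedness claim''). The only places you go beyond the paper are in sketching verifications it leaves implicit --- the Fadell--Neuwirth identification of $\pi_1(\mathscr Q)$ with the equalizer (modulo the central $\mathbb Z/2$, which you rightly note) and the sheet-counting argument for the $d^{n-2}$ orbits of $\Mod(C)$ --- and you correctly flag the latter count, which the paper obtains instead as the index of the liftable subgroup $L$, as the one point still needing honest verification.
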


Every $(\tau,f)\in\mathscr R_\Pi$ yields two complex structures on $(S^2,A)$: one locally given by $\tau$ and one by $\tau\circ f$ and denoted $\sigma_f(\tau)$. The \emph{rational Rees space} $\mathscr R_\Pi^\Rat$ is the locus at which these complex structures agree; so for $(\tau,f)\in\mathscr R_\Pi^\Rat$ we have a commutative diagram
\[\begin{tikzcd}
    (S^2,C)\arrow[r,"\tau"]\arrow[d,"f" left]\arrow[dr,phantom,"\ulcorner\sim",very near start] & \Pone\arrow[d,"r_{\tau,f}"]\\
    (S^2,B)\arrow[r,"\tau"] & \Pone
  \end{tikzcd}
\]
that commutes up to homotopy, for some rational map $r_{\tau,f}\in\Rat_d$, unique up to conjugation by a M\"obius transformation.

Let $\mathbf{MRat}$ denote the quotient of $\Rat=\C(z)$ by $\PSL_2(\C)$ acting under conjugation; and consider the locus
\[\mathbf{MRat}_\Pi=(\Rat\cap\mathscr P_\Pi)/\PSL_2(\C).\]
Note that every $f\in\mathbf{MRat}_\Pi$ has attached subsets $A,B,C\subset\Pone$, well-defined up to a M\"obius transformation. Every choice of $\tau\in\mathcal T_B$ gives a natural map $\mathscr P_\Pi\to\mathscr R_\Pi$, by $f\mapsto(\tau,f)$. The natural map $\mathscr R_\Pi^\Rat\to\mathbf{MRat}_\Pi$, given by $(\tau,f)\mapsto r_{\tau,f}$, is an isomorphism. There is finally a map $\mathbf{MRat}_\Pi\to\mathcal M_B$, the moduli space of $(S^2,B)$, given by $r_{\tau,f}\mapsto\tau\restriction B$. Consider, to complete the picture, Epstein's equalizer space: it is the space
\[\Def_{\Pi,f}=\{\tau\in\mathcal T_B:\tau\text{ and }\sigma_f(\tau)\text{ have the same image in }\mathcal T_A\}.\]
Epstein's transversality theory~\cite{epstein:transversality} shows that $\Def_{\Pi,f}$ is a submanifold of dimension $\#B-\#A-3$. In summary, we have
\[\begin{tikzcd}
    \Def_{\Pi,f}\arrow[r,hook]\arrow[d,"{\tau\mapsto(\tau,f)}" left] & \mathcal T_B\arrow[d]\\
    \makebox[0pt][r]{$\mathbf{MRat}_\Pi={}$}\mathscr R_\Pi^\Rat\arrow[r,hook]\arrow[d,"{(\tau,f)\mapsto\tau\restriction B}" left] & \mathscr R_\Pi\arrow[d]\\
\mathcal M_\Pi\arrow[r,hook] & \mathcal M_B
  \end{tikzcd}
\]
and our result shows that the top vertical maps are onto. The bottom left vertical map is a finite-to-one map~\cite{firsova-kahn-selinger:deformation}*{Lemma~4.5} and in many cases, in particular that of a degree-$d$ map with a marked $n$-cycle~\cite{firsova-kahn-selinger:deformation}*{Lemma~4.7} is actually a bijection.

Previous literature concentrated on connectivity and contractibility of $\Def_\Pi$. The arguments in~\cites{firsova-kahn-selinger:deformation,hironaka-koch:disconnected}, proving that $\Def_\Pi$ is disconnected or at least not contractible, rely on showing that the fundamental group of the punctured Riemann surface $\mathbf{MRat}_\Pi$ is not isomorphic to the equalizer $E\le\Mod(B)$.

\subsection{Sketch of proof}\label{ss:sketch}
We reduce Theorem~\ref{thm:main} to a discrete problem, by means of isotopy. 
\begin{enumerate}
\item Consider the fibration $\mathscr P_\Pi\to\mathscr Q$ and let $W$ be the collection of isotopy classes of a fibre. In our concrete situation, we imagine that sets $A,B,C$ of respective sizes $n,n+1,n+1$ are frozen on $S^2$, and we consider the collection $W$ of isotopy classes of maps $(S^2,C)\to(S^2,B)$ that permute $A$ cyclically. Thus $W$ is the collection of isotopy classes of maps $f\in\mathscr P_{d,n}$ with these fixed $A,B,C$.

\item There are two commuting actions on $W$, by the pure mapping class groups $\Mod(B)$ and $\Mod(C)$, respectively by post-composition and pre-composition. The action of $\Mod(C)$ is free with $d^{n-2}$ orbits.

\item There is a single degree-$d$ covering with two order-$d$ marked critical points; this is the ``Hurwitz problem'' for the data $(d,d)$. In fact, by~\cite{thom:polynome}*{Th\'eor\`eme (1)} this holds more generally for any degree-$d$ covering with a critical point of order $d$. It follows that there is a single $(\Mod(B)\times\Mod(C))$-orbit on $W$.

\item Denote respectively by $\varepsilon_B,\varepsilon_C$ the natural restriction maps $\Mod(B)\to\Mod(A),\Mod(C)\to\Mod(A)$. The subgroup $E=\{(g,h)\in\Mod(B)\times\Mod(C):\varepsilon_B(g)=\varepsilon_C(h)\}$ acts on $W$, and movements in $E$ preserve the cycle $A$. Connectedness of $\mathscr P_{d,n}$ is therefore equivalent to transitivity of $E$ on $W$.

\item To prove transitivity of $E$, fix a polynomial map $f\in\mathscr P_{d,n}$, thereby assuming $B=C$. We consider two subgroups of $E$: the diagonal $\Delta=\{(g,g):g\in\Mod(B)\}$, which acts on $W$ by conjugation, and $P=\ker(\varepsilon_B)$, the ``point pushes'' of the critical value, which act on $W$ by post-composition. It suffices to show that every element of $W$ may be written $p g f g^{-1}$ with $g\in\Mod(B)$ and $p\in P$.

\item Compute the ``lifting'' operation: let $L\le\Mod(B)$ denote the index-$d^{n-2}$ subgroup of liftable classes; namely all $h\in\Mod(B)$ such that there exists $\phi(h)\in\Mod(C)$ with $h\circ f\cong f\circ\phi(h)$. Enough images under homomorphism $\phi\colon L\to\Mod(C)$ may be written explicitly to show that all generators of $\Mod(B)$ may be obtained from $P$ via $\phi$ and conjugation.

\end{enumerate}

\subsection{Acknowledgments}
This work owes much to Jan Kiwi, who explained to me the relevance of group-theoretic considerations to connectedness questions in parameter spaces during an invitation to Santiago. I am very grateful to MSRI for providing an extremely stimulating environment, and to Tanya Firsova, Eko Hironaka, Jeremy Kahn, Sarah Koch and Curtis McMullen for lively discussions and generous explanations of their work.

\section{Dynamical bisets and mapping class bisets}
Let us consider the polynomial $f(z)=z^d+c$ for which the supporting rays have angles $\{1,2\}/(d^n-1)$, and encode $f$ group-theoretically. This means we let $A=\{a_0=a_n=0,a_1=c,a_2=c^d+c,\dots\}$ be the length-$n$ critical cycle of $f$; set $B=C=A\cup\{\infty\}$; fix a basepoint $*$ in $\C\setminus A$ near $\infty$; let $\pi=\pi_1(\C\setminus A,*)$ be the fundamental group; and choose ``lollipop'' generators $\gamma_1,\dots,\gamma_n,\gamma_\infty$ of $\pi$: the generator $\gamma_i$ follows the external ray with angle $d^i/(d^n-1)$ towards $a_i$, encircles it counterclockwise, and returns back to $*$, while the generator $\gamma_\infty$ is a small (on the sphere) counterclockwise loop around $\infty$. Note that one of the generators of $\pi$ is redundant, and we have
\[\pi=\langle \gamma_1,\dots,\gamma_n,\gamma_\infty\mid \gamma_\infty\gamma_n\cdots\gamma_1=1\rangle.\]

The ``iterated monodromy group'' theory~\cite{nekrashevych:ssg} exncodes $f$ as a \emph{biset}: a set $\mathfrak B(f)$ with two commuting $\pi$-actions:
\[\mathfrak B(f)=\{\alpha\colon[0,1]\to\C\setminus A:\alpha(0)=f(\alpha(1))=*\}/{\sim},\]
with the left action by pre-concatenation of a loop, and the right action by post-concatenation with the unique lift of a loop that starts where $\alpha$ ends.
The left action on $B_f$ is free with $d$ orbits; so $B_f$ may be written $\pi\times\{x_1,\dots,x_d\}$ by choosing a system of orbit representatives. A natural choice consists of short, counterclockwise paths around $\infty$ from $*$ to all its preimages; for concreteness if $*\gg0$ we may choose $x_j(t)=\exp(\log(*)(d+1-d t)/d+2\pi i j t/d)$.

It is then straightforward to trace paths and their lifts, so as to express the right action of $\pi$ on $B_f$; it is
\begin{xalignat*}{3}
  x_1\cdot\gamma_1&=\gamma_\infty\gamma_n\cdot x_2, &
  x_i\cdot\gamma_1&=x_{i+1}\text{ if }1<i<d, &
  x_d\cdot\gamma_1&=\gamma_\infty^{-1}\cdot x_1,\\
  x_1\cdot\gamma_{j+1}&=\gamma_j\cdot x_1, &
  x_i\cdot\gamma_{j+1}&=x_i\text{ if }1<i\le d\text{ and }1\le j\le n,\\
  x_1\cdot\gamma_\infty&=\gamma_\infty\cdot x_d, &
  x_{i+1}\cdot\gamma_\infty&=x_i\text{ if }1\le i<d.
\end{xalignat*}
This is the natural degree-$d$ generalization of the recursion defining the group $\mathfrak K(0^{n-1})$ from~\cite{bartholdi-n:mandelbrot1}; see also~\cite{poirier:portraits}.

Note that a different choice of orbit representatives $\{x_1',\dots,x_d'\}$ would give different formulas for the right action; the object $B_f$, considered up to isomorphism of $\pi$-$\pi$-bisets, is a complete invariant of $f$ but its presentation relies on choices.

Let us make this a bit more precise. A \emph{presentation} of a biset $B_{f'}$, for a map $f'\in W$, is a $d\times(n+1)$ matrix with in entry $(i,j)$ a pair $(g,k)\in\pi\times\{1,\dots,d\}$, describing the relation $x_i\cdot\gamma_j=g\cdot x_k$; here we use the convention $n+1=\infty$. A permutation of the $i$ and $k$ defines an isomorphic presentation (this amounts to reordering the orbit representatives); and for every choice of $g_1,\dots,g_d\in\pi$ the replacement of every $(g,k)$ at position $(i,j)$ with $(g_i g g_k^{-1},k)$ also defines an isomorphic presentation (this amounts to replacing the orbit representatives $(x_1,\dots,x_d)$ with $(g_1 x_1,\dots,g_d x_d)$).

Furthermore, the left and right actions on $W$ may be expressed in terms of these presentations: pre-composition by $\psi$ amounts to replacing each $(g,k)$ by $(\psi(g),k)$; while post-composition by $\phi$ amounts to using the table to rewrite $x_i\cdot\phi(\gamma_j)$ in the form $g\cdot x_k$ and recording the result in a new table.

Now, $\Mod(B)$ is generated by a collection of full Dehn twists between elements of $B$, and acts on $\pi$ by outer automorphisms. We choose as generators the $\tau_{i,j}$ for $1\le i<j\le\infty$ which, geometrically, push $a_i$ and $a_j$ closer while avoiding all other paths $\gamma_k$, and twist them fully around each other. The action of $\tau_{i,j}$ on $\pi$ may be written concretely as follows: set $\alpha=\gamma_{j-1}\gamma_{j-2}\cdots\gamma_{i+1}$; then
\[\tau_{i,j}(\gamma_i)=\gamma_i^{\alpha^{-1}\gamma_j \alpha\gamma_i},\qquad\tau_{i,j}(\gamma_j)=\gamma_j^{\alpha\gamma_i \alpha^{-1}},\qquad\tau_{i,j}(\gamma_k)=\gamma_k.\]
            
The subgroup of point pushes is $P=\langle\tau_{i,\infty}:1\le i\le n\rangle$, and its liftable elements include $\tau_{i,\infty}^d$ as well as $\tau_{1,\infty}$ and its conjugates.

Once all these choices are made, it is straightforward to check the following identities:
\begin{lem}\label{lem}
  In $W$ we have the identities
  \begin{align}
    \tau_{1,\infty}\cdot f&=f\cdot\tau_{n,\infty},\\
    \tau_{1,\infty}^{\tau_{i+1,\infty}}\cdot f&=f\cdot \tau_{i,\infty}\tau_{i,n}\tau_{n,\infty},\label{eq:1conj}\\
    \tau_{i+1,\infty}^d\cdot f&=f\cdot \tau_{i,\infty}^{\tau_{i,n}},\\
    \tau_{i+1,j+1}\cdot f&=f\cdot\tau_{i,j},\label{eq:ij}
  \end{align}
  for all $1\le i,j\le n-1$.
\end{lem}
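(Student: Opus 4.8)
The plan is to verify the four identities one at a time, purely at the level of the biset presentations, using the fact that two elements of $W$ coincide exactly when their $d\times(n+1)$ tables agree up to the two admissible moves recorded above: a permutation of the orbit representatives $x_1,\dots,x_d$, and a substitution $(g,k)\mapsto(g_i g g_k^{-1},k)$ of the entry in row $i$. For an identity $h\cdot f=f\cdot h'$ I would compute the table of the post-composition $h\cdot f$ by rewriting each product $x_i\cdot h_*(\gamma_j)$ into its normal form $g\cdot x_k$ through the recursion for the right action, and compute the table of the pre-composition $f\cdot h'$ by applying the induced automorphism $h'_*$—read off from the explicit formula for $\tau_{i,j}$—to the left-component of every entry. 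The claimed identity then reduces to exhibiting an admissible move carrying one table to the other.

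The organizing principle, and the most transparent case, is identity~\eqref{eq:ij}. Geometrically, $f$ sends $a_i\mapsto a_{i+1}$ and carries a disc around $\{a_i,a_j\}$ that contains no critical point homeomorphically onto a disc around $\{a_{i+1},a_{j+1}\}$; hence the source twist $\tau_{i,j}$ pushes forward under $f$ to the target twist $\tau_{i+1,j+1}$, which is exactly the asserted equality and explains the restriction $1\le i,j\le n-1$, ensuring that $i+1,j+1$ avoid both the critical point $a_n=0$ and $\infty$. At the level of tables this is visible because $(\tau_{i+1,j+1})_*$ involves only the generators $\gamma_2,\dots,\gamma_{n-1}$ and leaves $\gamma_1,\gamma_n,\gamma_\infty$ untouched, so the rewriting of the right action never crosses the monodromy $d$-cycle and the two tables match after a mere reindexing.

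The remaining three identities are the substantive ones: each couples a twist at $\infty$ with the critical value $a_1=c$ or the critical point $a_n=0$, and it is precisely here that the monodromy $d$-cycle $x_1\cdot\gamma_\infty=\gamma_\infty\cdot x_d$, $x_{i+1}\cdot\gamma_\infty=x_i$ enters. Because a loop around $\infty$ lifts through all $d$ sheets, a single upstairs twist must be balanced either by a $d$-th power, yielding $\tau_{i+1,\infty}^d$ in the identity $\tau_{i+1,\infty}^d\cdot f=f\cdot\tau_{i,\infty}^{\tau_{i,n}}$, or by a conjugated and compounded twist, producing $\tau_{1,\infty}^{\tau_{i+1,\infty}}$ together with the extra factor $\tau_{i,n}$ in identity~\eqref{eq:1conj}; the identity $\tau_{1,\infty}\cdot f=f\cdot\tau_{n,\infty}$ is the base case of this pattern.

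I expect the bookkeeping in these three cases to be the main obstacle: the automorphisms $(\tau_{i,j})_*$ introduce conjugators $\alpha=\gamma_{j-1}\cdots\gamma_{i+1}$, and matching the two tables requires both a careful choice of the change-of-representative elements $g_1,\dots,g_d$ and repeated use of the defining relation $\gamma_\infty\gamma_n\cdots\gamma_1=1$ to absorb the stray $\gamma_\infty^{\pm1}$ factors that the recursion produces. Once this normalization is organized, each of the four claims collapses to a finite, mechanical verification.
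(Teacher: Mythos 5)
Your plan is precisely the paper's method: reduce each identity $\phi\cdot f=f\cdot\psi$ to an equality of presentation tables, computing the left-hand side by rewriting each $x_i\cdot\phi(\gamma_j)$ through the recursion and the right-hand side by applying $\psi$ to the left components, and then exhibiting an admissible change of orbit representatives identifying the two tables; your heuristics (the degree-$d$ annulus around $\infty$ forcing the $d$-th power in the third identity, the sheet-preserving action of $\gamma_2,\dots,\gamma_n$ making \eqref{eq:ij} a pure index shift) correctly predict the shape of each right-hand factor. The one thing to note is that the entire content of the paper's proof is exactly the ``mechanical bookkeeping'' you defer: it writes out only the hardest case \eqref{eq:1conj}, and there the two tables do \emph{not} match by a mere relabelling — one must find and verify the nontrivial intertwiner $y_1\mapsto z_1$, $y_k\mapsto\epsilon^{-1}\zeta^{-1}\cdot z_k$ for $1<k<d$, $y_d\mapsto\gamma_i^{\zeta^{-1}\gamma_\infty}\epsilon^{-1}\zeta^{-1}\cdot z_d$ — so as written your proposal is a correct strategy rather than a proof. (A small imprecision: for $j=n-1$ the twist $\tau_{i+1,j+1}=\tau_{i+1,n}$ does move $\gamma_n$; the argument for \eqref{eq:ij} survives because all of $\gamma_2,\dots,\gamma_n$ act without permuting the basis, not because $\gamma_n$ is untouched.)
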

\begin{proof}
  We only consider~\eqref{eq:1conj}, the other ones being checked in a similar but easier manner. Set $\phi=\tau_{1,\infty}^{\tau_{i+1},\infty}$; then, writing $\delta=\gamma_i\gamma_{i-1}\cdots\gamma_1$ and $\beta=\gamma_{i+1}^\delta\gamma_\infty$, we have
  \begin{align*}
    \phi(\gamma_1) &= \gamma_1^{\gamma_\infty^\beta},\\
    \phi(\gamma_{i+1}) &= \gamma_{i+1}^{\delta\gamma_\infty^{-\gamma_1^{\beta^{-1}}}\gamma_\infty\delta^{-1}},\\
    \phi(\gamma_\infty) &= \gamma_\infty^{\beta\gamma_1\gamma_\infty^{-\gamma_1^{\beta^{-1}}\gamma_\infty}\beta},\\
  \end{align*}
  all other generators being fixed. The biset $B(\phi\circ f)$ of the post-composition of $f$ with $\phi$ is then presented as follows in a basis $(y_1,\dots,y_d)$, with $\epsilon=\gamma_i\cdots\gamma_1$ and $\zeta=\gamma_{n-1}\cdots\gamma_{i+1}$ so $\gamma_\infty\gamma_n\zeta\gamma_i\epsilon=1$:
  \[\mathfrak B(\phi\circ f):\begin{cases}
    y_1\cdot\gamma_1 &= \gamma_i^{-\zeta^{-1}}\cdot y_2,\\
    y_{d-1}\cdot\gamma_1 &= \gamma_i^{-\zeta^{-1}\gamma_\infty}\cdot y_d,\\
  y_d\cdot\gamma_1 &= \gamma_\infty^{-1}\gamma_i^{\zeta^{-1}}\epsilon^{-1}\zeta^{-1} \cdot y_1,\\
    y_1\cdot\gamma_{i+1} &= \gamma_i^{\zeta^{-1}\epsilon^{-1}} \cdot y_1,\\
    y_1\cdot\gamma_\infty &= \zeta\epsilon\gamma_i^{-\zeta^{-1}}\gamma_\infty\cdot y_d,\\
    y_2\cdot\gamma_\infty &= \epsilon^{-1}\zeta^{-1}\cdot y_1,\\
    y_d\cdot\gamma_\infty &= \gamma_i^{\zeta^{-1}\gamma_\infty}\cdot y_2,
  \end{cases}\]
  all other entries being as in the presentation of $\mathfrak B(f)$. On the other hand, set $\psi=\tau_{i,\infty}\tau_{i,n}\tau_{n,\infty}$; we have
  \begin{align*}
    \psi(\gamma_i) &= \gamma_i^{\zeta^{-1}\epsilon^{-1}},\\
    \psi(\gamma_n) &= \gamma_n^{\epsilon^{-1}\zeta^{-1}},\\
    \psi(\gamma_\infty) &= \gamma_\infty^{\epsilon^{-1}\zeta^{-1}},
  \end{align*}
  all other generators being fixed. The biset $\mathfrak B(f\circ\psi)$ is presented as follows in a basis $(z_1,\dots,z_d)$:
  \[\mathfrak B(f\circ\psi):\begin{cases}
    x_1\cdot\gamma_1 &= (\gamma_\infty\gamma_n)^{\epsilon^{-1}\zeta^{-1}}\cdot x_2,\\
    x_d\cdot\gamma_1 &= \gamma_\infty^{-\epsilon^{-1}\zeta^{-1}}\cdot x_1,\\
    x_1\cdot\gamma_{i+1} &= \gamma_i^{\zeta^{-1}\epsilon^{-1}}\cdot x_1,\\
    x_1\cdot\gamma_\infty &= \gamma_\infty^{\epsilon^{-1}\zeta^{-1}}\cdot x_d,\\
  \end{cases}\]
  all other entries being as in the presentation of $\mathfrak B(f)$. Now to prove that $\mathfrak B(\phi\circ f)$ and $\mathfrak B(f\circ\psi)$ are isomorphic, it suffices to map the basis of the former into the latter, as follows:
  \begin{align*}
    y_1 &\mapsto z_1,\\
    y_k &\mapsto \epsilon^{-1}\zeta^{-1}\cdot z_k \text{ for }1<k<d,\\
    y_d &\mapsto \gamma_i^{\zeta^{-1}\gamma_\infty}\epsilon^{-1}\zeta^{-1}\cdot z_d,
  \end{align*}
  and to check that the right actions of $\pi$ on $\mathfrak B(\phi\circ f)$ and $\mathfrak B(f\circ\psi)$ are intertwined by this map.
\end{proof}

\begin{cor}\label{cor:conj}
  We have $W=P f^{\Mod(B)}$, for $P$ the group of point pushes and $\Mod(B)$ acting by conjugation.
\end{cor}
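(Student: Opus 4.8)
The goal is to show $W = P\, f^{\Mod(B)}$: every isotopy class in the fibre can be written as a point-push times a conjugate of the fixed polynomial $f$.The plan is to reduce the claim to a single group-theoretic surjectivity statement and then to read that statement off the explicit lifts of Lemma~\ref{lem}. Two structural inputs are needed at the outset. First, by the single-orbit property (step~(3) of the sketch) $W$ is one $\Mod(B)\times\Mod(C)$-orbit, and since $f$ is a polynomial we may take $B=C$; hence every $w\in W$ can be written $w=g\cdot f\cdot h$ with $g,h\in\Mod(B)$. Second, since $P=\ker(\varepsilon_B)$ is \emph{normal} in $\Mod(B)$, a conjugate of a point push is a point push, so the target set $S:=Pf^{\Mod(B)}$ is stable both under left multiplication by $P$ and under conjugation by $\Mod(B)$; this is exactly what makes the two moves of step~(5) suffice to sweep out $W$.

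The heart of the argument is a computation using that the right $\Mod(C)$-action is free. Writing $L$ for the liftable subgroup and $\phi\colon L\to\Mod(C)$ for the lifting homomorphism, so that $\ell f=f\,\phi(\ell)$ for $\ell\in L$, I would check that the equation $gfh=p\,kfk^{-1}$ is solvable in $p\in P,\ k\in\Mod(B)$ if and only if there is $\ell\in L$ with
\[\varepsilon_C(\phi(\ell))^{-1}\varepsilon_B(\ell)=\varepsilon_B(h)\,\varepsilon_B(g).\]
Indeed, the substitution $\ell=k^{-1}p^{-1}g$ converts the two conditions defining a solution, namely $\ell\in L$ together with $\phi(\ell)=k^{-1}h^{-1}$, into precisely this single equation, the membership $p\in P$ being exactly its content; conversely one recovers $k=h^{-1}\phi(\ell)^{-1}$ and $p=g\ell^{-1}k^{-1}$ from any such $\ell$. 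As $w=gfh$ ranges over $W$ the right-hand side $\varepsilon_B(h)\varepsilon_B(g)$ ranges over all of $\Mod(A)$, so the Corollary reduces to surjectivity of the map $\theta\colon L\to\Mod(A)$ given by $\theta(\ell)=\varepsilon_C(\phi(\ell))^{-1}\varepsilon_B(\ell)$.

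Surjectivity of $\theta$ is where Lemma~\ref{lem} enters. The liftable point pushes do the decisive work: the element $\tau_{1,\infty}^{\tau_{i+1,\infty}}\in P\cap L$ has $\varepsilon_B(\tau_{1,\infty}^{\tau_{i+1,\infty}})=1$, while by \eqref{eq:1conj} its lift is $\tau_{i,\infty}\tau_{i,n}\tau_{n,\infty}$, whose $\varepsilon_C$-image is $\varepsilon_C(\tau_{i,n})$ because $\tau_{i,\infty},\tau_{n,\infty}\in\ker\varepsilon_C$; hence $\theta$ hits $\varepsilon_C(\tau_{i,n})^{-1}$ for $1\le i\le n-1$. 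Since $P\cap L$ is a group and $\varepsilon_C\circ\phi$ a homomorphism on it, the subgroup $\varepsilon_C(\phi(P\cap L))$ contains all the twists $\varepsilon_C(\tau_{i,n})$, and because $\varepsilon_B$ kills $P$ this places $\{1\}\times\varepsilon_C(\phi(P\cap L))$ inside the image of $(\varepsilon_B,\varepsilon_C\circ\phi)$; so $\theta$ is onto the moment these twists generate $\Mod(A)$. The relation \eqref{eq:ij}, which gives $\theta(\tau_{i+1,j+1})=\varepsilon_C(\tau_{i,j})^{-1}\varepsilon_B(\tau_{i+1,j+1})$, supplies the remaining ``shifted'' classes should they be required.

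\textbf{Main obstacle.} I expect the genuine difficulty to lie entirely in this last generation step: proving that the explicitly computed lifts of Lemma~\ref{lem}---in particular the images $\varepsilon_C(\tau_{i,n})$ of the liftable point pushes---generate all of $\Mod(A)$, so that $\theta$ is surjective. Everything preceding it is formal (normality of $P$, freeness of the $\Mod(C)$-action, and the single-orbit input), whereas here one must use the arithmetic of the recursion together with the bookkeeping that $L$ has the correct index $d^{n-2}$ in $\Mod(B)$, so as to be certain that no generator of $\Mod(A)$ is missed.
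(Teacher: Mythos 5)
Your reduction is sound: the algebra showing that $gfh\in Pf^{\Mod(B)}$ iff some $\ell\in L$ satisfies $\varepsilon_C(\phi(\ell))^{-1}\varepsilon_B(\ell)=\varepsilon_B(h)\varepsilon_B(g)$ checks out (using freeness of the right action and $B=C$), and restating the Corollary as surjectivity of $\theta$ is a legitimate alternative to the paper's framing. But the step you yourself flag as the main obstacle is exactly where the content of the proof lies, and your sketch of it does not go through. Two concrete problems. First, the twists $\varepsilon_C(\tau_{i,n})$, $1\le i\le n-1$, do \emph{not} generate $\Mod(A)$ once $n\ge 5$: they generate precisely the point-pushing kernel of the forgetful map $\Mod(A)\to\Mod(A\setminus\{a_n\})$, which is proper as soon as the target is nontrivial. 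So the conditional ``$\theta$ is onto the moment these twists generate $\Mod(A)$'' has a false hypothesis in the cases that matter. Second, $\theta$ is not a homomorphism on $L$ (the two factors $\varepsilon_C(\phi(\ell))^{-1}$ and $\varepsilon_B(\ell)$ do not intertwine multiplicatively), so its image is not a subgroup; knowing that $\theta$ hits the isolated elements $\varepsilon_C(\tau_{i,j})^{-1}\varepsilon_B(\tau_{i+1,j+1})$ from~\eqref{eq:ij} gives you no licence to multiply them into the subgroup you obtained from $P\cap L$. ``Supplies the remaining shifted classes should they be required'' is precisely the assertion that needs proof.

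What is missing is an iteration. To reach $\tau_{i,j}$ with $j$ small one must apply the lifting operation roughly $n-j$ times: $\tau_{i,j}$ is the lift of $\tau_{i+1,j+1}$, which is the lift of $\tau_{i+2,j+2}$, \dots, terminating in a twist $\tau_{\cdot,n}$ that appears inside the lift $\tau_{i,\infty}\tau_{i,n}\tau_{n,\infty}$ of a point push. A single application of $\phi$, which is all that $\theta$ sees, cannot produce these classes. The paper packages the induction by introducing $M=\{m\in\Mod(B):mf^{\Mod(B)}\subseteq Pf^{\Mod(B)}\}$ and proving three closure properties: $M$ is a subgroup, is conjugation-invariant (normality of $P$, which you do use), and --- the decisive one --- is closed under passing to lifts: if $m\in M$ and $m\cdot f=f\cdot h$ then $h\in M$. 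This last property is what lets $\tau_{i,n}\in M$ propagate down to all $\tau_{i,j}$ via~\eqref{eq:ij} and conclude $M=\Mod(B)$. Your argument has no analogue of it; to salvage your approach you would need to work with the full image subgroup $H=(\varepsilon_B,\varepsilon_C\circ\phi)(L)\le\Mod(A)\times\Mod(A)$, observe that $H\supseteq\{1\}\times\ker(\Mod(A)\to\Mod(A\setminus\{a_n\}))$, pass to the quotient by that normal subgroup, and repeat --- which is the paper's induction in different clothing.
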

\begin{proof}
  The solution to the Hurwitz problem implies $W=\Mod(B) f^{\Mod(B)}$. Now consider the set $M\subseteq\Mod(B)$ of all $m\in\Mod(B)$ with $m f^{\Mod(B)}\subseteq P f^{\Mod(B)}$. Clearly $M$ is closed under conjugation, since $P$ is normal in $\Mod(B)$. Also, $M$ is a subgroup: if $m,n\in M$ and $m f^{\Mod(B)},n f^{\Mod(B)}\subseteq P f^{\Mod(B)}$, then
  \[m n f^{\Mod(B)}\subseteq m(P f^{\Mod(B)})=m f^{\Mod(B)} P\subseteq P f^{\Mod(B)}P= P^2 f^{\Mod(B)}= P f^{\Mod(B)}.\]
  Consider $m\in M$, and assume that we have an identity $m\cdot f=f\cdot h$ in $W$; then $h\cdot f=(f\cdot h)^{h^{-1}}=(m\cdot f)^{h^{-1}}=m^{h^{-1}}\cdot f^{h^{-1}}\in P f^{\Mod(B)}$; so $h\in M$.

  Obviously we have $P\subseteq M$, namely $\tau_{i,\infty}\in M$ for all $i$. Then, by Lemma~\ref{lem}\eqref{eq:1conj}, we have $\tau_{i,n}\in M$. Then Lemma~\ref{lem}\eqref{eq:ij} gives $\tau_{i-1,n-1}\in M$, etc., so finally all $\tau_{i,j}\in M$. Therefore $M=\Mod(B)$.
\end{proof}

\section{Proof of Theorem~\ref{thm:main}}
We expand on the sketch presented in~\S\ref{ss:sketch}. Consider first the map $\tau\colon\mathscr P_\Pi\ni(f,b,c)\mapsto(b,c)$ sending a branched covering to its marked sets $b,c$ on $S^2$. This map is evidently a fibration, with fibre $\mathscr W$ consisting of all branched coverings $f$ with fixed $b,c$. We treat, from now on, $A,B,C$ as fixed subsets of $S^2$ with $A\subset B\cap C$, so $\mathscr W$ is the set of branched coverings $(S^2,C)\to(S^2,B)$ that map $C$ to $B$ with combinatorics and degree prescribed by $\Pi$.

We let $W$ denote the quotient of $\mathscr W$ under isotopy. For any two branched coverings $f,f'\in\mathscr W$ that are isotopic, there is a unique homeomorphism $h\in\Homeo_0(S^2,C)$ with $f'=f\circ h$; in other words, $W$ is the quotient of $\mathscr W$ by the free action of $\Homeo_0(S^2,C)$. Now the group $\Homeo_0(S^2,C)$ is contractible, assuming $\#C\ge3$. It follows from the long exact sequence of a fibration and Whitehead's theorem that $\mathscr W$ has the homotopy type of a discrete set, and (except in dimension $\le1$) the homotopy type of $\mathscr P_\Pi$ is that of the base $\mathscr Q$ of the fibration,
\[\mathscr Q\coloneqq\{(b\colon B\hookrightarrow S^2,c\colon C\hookrightarrow S^2): b\restriction A=c\restriction A\}.\]
We have thus proven Proposition~\ref{prop:homotopy}, modulo the main connectedness claim.

There are two commuting actions on $W$, by the pure mapping class groups $\Mod(B)$ and $\Mod(C)$, respectively by post-composition and pre-composition. The action of $\Mod(C)$ is free with finitely many orbits. More precisely, for every $f\in W$ there is a finite-index subgroup $L_f\le\Mod(B)$ consisting of ``liftable'' classes: all $\ell\in\Mod(B)$ for which there exists $m\in\Mod(B)$ with $\ell\circ f\cong f\circ m$. We denote this (necessarily unique) $m\in\Mod(C)$ by $\sigma_f(\ell)$, defining thus a homomorphism $\sigma_f\colon L_f\to\Mod(C)$.

The long exact sequence of homotopy groups gives
\[\begin{tikzcd}\pi_1(\mathscr Q)\arrow[r] & \pi_0(\mathscr W)\arrow[r] & \pi_0(\mathscr P_\Pi)\arrow[r] & \pi_0(\mathscr Q).\end{tikzcd}\]
There is a natural map $\varepsilon_B\colon\Mod(B)\twoheadrightarrow\Mod(A)$ induced by the inclusion $A\subset B$; and similarly $\varepsilon_C\colon\Mod(C)\twoheadrightarrow\Mod(A)$. The equalizer of these two maps is the subgroup
\[E=\{(g,h)\in\Mod(B)\times\Mod(C):\varepsilon_B(g)=\varepsilon_C(h)\}\]
and we naturally have $E\cong\pi_1(\mathscr Q)$. We thus have
\begin{lem}
  The space $\mathscr P_\Pi$ is path connected if and only if the action of $E$ on $W$ is transitive.\qed
\end{lem}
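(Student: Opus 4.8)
The plan is to extract the statement directly from the long exact homotopy sequence of the fibration $\tau\colon\mathscr P_\Pi\to\mathscr Q$, once the monodromy action has been made explicit. First I would observe that $\mathscr Q$ is path connected: it is a space of configurations of the finite set $B\cup_A C$ on $S^2$ (the fibre product of the configuration spaces of $B$ and of $C$ over that of $A$), and configuration spaces of points on a connected surface are path connected, so $\pi_0(\mathscr Q)=*$. Plugging this into the exact sequence of pointed sets
\[\pi_1(\mathscr Q)\to\pi_0(\mathscr W)\to\pi_0(\mathscr P_\Pi)\to\pi_0(\mathscr Q)=*,\]
whose first arrow is the connecting map $\partial$, shows that $\pi_0(\mathscr W)=W\to\pi_0(\mathscr P_\Pi)$ is onto and that its fibres are precisely the orbits of the monodromy action of $\pi_1(\mathscr Q)$ on $W$. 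Thus $\pi_0(\mathscr P_\Pi)$ is the orbit set of that action; since $W\neq\emptyset$ (the Hurwitz problem is solvable), $\mathscr P_\Pi$ is path connected exactly when the monodromy acts transitively.

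The substance of the argument is then to identify this monodromy action of $\pi_1(\mathscr Q)$ with the action of $E$ on $W$ already defined by pre- and post-composition. I would invoke the isotopy extension theorem. A loop $\gamma$ in $\mathscr Q$ based at the fixed configuration $(B,C)$ is a pair of isotopies of embeddings $b_t,c_t$ with $b_t\restriction A=c_t\restriction A$; extend them to ambient isotopies $G_t,H_t\in\Homeo(S^2)$ with $G_0=H_0=\mathrm{id}$, $G_t\circ b_0=b_t$ and $H_t\circ c_0=c_t$. Closing the loop forces $G_1\in\Homeo(S^2,B)$ and $H_1\in\Homeo(S^2,C)$; their classes $g=[G_1]\in\Mod(B)$ and $h=[H_1]\in\Mod(C)$ restrict to the same class in $\Mod(A)$, since $A$ is transported identically whether regarded inside $B$ or inside $C$. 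Hence $(g,h)\in E$, and this assignment is the identification $\pi_1(\mathscr Q)\cong E$.

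To read off the action, I would lift $\gamma$ through $\tau$ starting at a given $f\in\mathscr W$ by setting $f_t=G_t\circ f\circ H_t^{-1}$. A direct check that $f_t\circ c_t=b_t\circ F$ and that the local degrees and the ramification locus are preserved shows $f_t\in\tau^{-1}(\gamma(t))$, with $f_0=f$, so this is a genuine lift inside $\mathscr P_\Pi$. Its endpoint $f_1=G_1\circ f\circ H_1^{-1}$ represents the class $g\cdot f\cdot h^{-1}$ in $W$, which is exactly the (genuine left) action of $(g,h)\in E$ by post-composition with $g$ and pre-composition with $h^{-1}$. This matches the topological monodromy with the algebraic $E$-action, so transitivity of $E$ on $W$ becomes literally equivalent to $\pi_0(\mathscr P_\Pi)=*$, that is, to path connectedness.

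The step I expect to be delicate is this last identification rather than the formal homotopy theory: one must confirm that $f_t$ lands in $\mathscr P_\Pi$ and not merely in the ambient space of branched covers (so the constraints imposed by $F$, by $\deg$, and by $b\restriction A=c\restriction A$ survive the homotopy), and keep the basepoint bookkeeping between $\Mod(B)$, $\Mod(C)$ and their common image in $\Mod(A)$ consistent with the pre/post-composition conventions fixed earlier. Once these are in place the equivalence is immediate in both directions.
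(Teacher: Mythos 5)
Your proposal is correct and is exactly the argument the paper intends: the lemma is stated with an immediate \qed because it is meant to follow from the long exact sequence of the fibration $\mathscr P_\Pi\to\mathscr Q$ and the identification $\pi_1(\mathscr Q)\cong E$ set up in the surrounding paragraphs, which is precisely what you spell out (including the isotopy-extension lift $f_t=G_t\circ f\circ H_t^{-1}$ realizing the monodromy as the pre/post-composition action). No substantive difference from the paper's route.
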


\subsection{Specifics for degree-$d$ bicritical maps with an $n$-cycle}
All the above considerations applied to a general portrait $\Pi$. We now consider the specific choice of a portrait $\Pi$ specifying an $n$-cycle marked $A$ that contains a single order-$d$ critical point, and another order-$d$ critical point marked in $C$, with its image marked in $B$.

\begin{lem}
  The action of $\Mod(B)\times\Mod(C)$ on $W$ is transitive.
\end{lem}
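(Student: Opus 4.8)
The plan is to reduce this transitivity statement to a classical Hurwitz-type result, since the last item in the sketch (\S\ref{ss:sketch}, item~3) already tells us that transitivity of $\Mod(B)\times\Mod(C)$ on $W$ is equivalent to the statement that there is a single Hurwitz class of degree-$d$ branched coverings of $S^2$ with the prescribed ramification data. Concretely, every element of $W$ is a branched covering $(S^2,C)\to(S^2,B)$ whose critical behaviour is fixed by $\Pi$: there are exactly two critical points, both of order $d$ (hence of local degree $d$), one marked in $C$ (mapping into the cycle $A$) and one mapping to the marked critical value $v\in B\setminus A$. I would argue that post-composition by $\Mod(B)$ and pre-composition by $\Mod(C)$ together realize \emph{every} change-of-marking between two such coverings, so that the orbit structure of $W$ is exactly the set of topological (unmarked) equivalence classes of such coverings.

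**First I would** make precise what data distinguishes a point of $W$. An element of $W$ is an isotopy class of branched covering $f\colon(S^2,C)\to(S^2,B)$ with the combinatorics of $\Pi$. By the Riemann--Hurwitz formula, a degree-$d$ self-covering of $S^2$ with two critical points each of local degree $d$ has no other ramification, since $2d-2 = (d-1)+(d-1)$ exhausts the total branching. Thus every $f\in W$ is, topologically, a map of the form $z\mapsto z^d$ read in suitable charts near each of its two critical points, and the only freedom lies in how the marked sets $B$ and $C$ are positioned. **The key step** is then to invoke the cited solution of the Hurwitz problem for the ramification data $(d,d)$: by~\cite{thom:polynome}*{Th\'eor\`eme (1)}, any degree-$d$ covering of $S^2$ with a critical point of order $d$ is unique up to topological equivalence, i.e. up to pre- and post-composition by orientation-preserving homeomorphisms. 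This yields a single equivalence class of the underlying covering.

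**Next I would** upgrade this unmarked uniqueness to transitivity of $\Mod(B)\times\Mod(C)$ on the marked classes $W$. Given $f,f'\in W$, Thom's theorem supplies homeomorphisms $h\colon(S^2)\righttoleftarrow$ and $h'\colon(S^2)\righttoleftarrow$ with $f' = h\circ f\circ h'$. The remaining task is to arrange that $h$ and $h'$ respect the markings so that they descend to mapping classes in $\Mod(B)$ and $\Mod(C)$: since both $f$ and $f'$ carry the same combinatorial data $\Pi$, the homeomorphisms may be isotoped so that $h$ sends $B$ to $B$ (preserving the partition into $A$ and the critical value) and $h'$ sends $C$ to $C$ (preserving $A$ and the marked critical point). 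The compatibility $f\circ c = b\circ F$ built into $\mathscr P_\Pi$ ensures these adjustments can be made simultaneously on the critical points and their orbits. Passing to isotopy classes, $h$ represents an element $g\in\Mod(B)$ and $h'$ an element $m\in\Mod(C)$ with $f' = g\cdot f\cdot m$ in $W$, which is exactly transitivity.

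**The main obstacle** I anticipate is this last marking step: Thom's result gives the homeomorphisms $h,h'$ only up to free homotopy of the covering, and one must verify that they can be chosen (after isotopy) to \emph{pure} mapping classes fixing each marked point individually rather than merely permuting $B$ and $C$ as sets. The periodicity of the cycle $A$ is what rigidifies this --- the $n$-periodic orbit of the critical point forces $h$ and $h'$ to act compatibly and in a way consistent with $F$, so that no marked point can be sent to a wrong one. Handling this point-by-point bookkeeping, and confirming that the degree-$d$ local model near each critical point leaves no extra discrete invariant (such as a relative twisting) unaccounted for, is where the real care lies; everything else follows formally from the Hurwitz-theoretic input.
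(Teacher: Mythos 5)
Your key input---that the Hurwitz problem for the ramification data $(d,d)$ has a unique solution---is exactly the one the paper uses (it quotes Hurwitz's classification directly, and mentions Thom's theorem only as an alternative in the parenthetical that follows the proof). The gap is in the step you yourself flag as ``where the real care lies'': passing from an unmarked topological equivalence $f'=h\circ f\circ h'$ to transitivity of the \emph{pure} mapping class groups on the marked classes $W$. You propose to ``isotope $h$ so that it sends $B$ to $B$'' and thence into $\Mod(B)$, but isotopy is the wrong tool: once $h$ preserves $B$ setwise, the permutation it induces on $B$ is an isotopy invariant, and a homeomorphism supplied by Thom's theorem may perfectly well swap the two critical values or scramble the points of $A$. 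Moreover, any isotopy of $h$ must be accompanied by an isotopy of $h'$ obtained by lifting through $f$, and where that lift sends the marked non-critical preimages $a_1,\dots,a_{n-1}\in C$ is governed by the monodromy of the covering---this is precisely the content to be controlled, not bookkeeping. Your proposed rigidifying mechanism is also misattributed: you credit the periodicity of $A$, whereas the first sentence of the paper's proof emphasizes that the $\Mod(B)\times\Mod(C)$-action \emph{ignores} the cycle condition entirely; periodicity only matters later, for the equalizer subgroup $E$.

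The clean route, which is the paper's, keeps the marking of $B$ built in from the start: a point of $W$ restricts to a degree-$d$ covering of $S^2\setminus B$, and the $\Mod(B)\times\Mod(C)$-orbits are classified by the monodromy homomorphism $\pi_1(S^2\setminus B,*)\to\operatorname{Sym}(d)$ up to conjugation. The generators around the two critical values map to mutually inverse $d$-cycles and all other generators to the identity, and any two such tuples are simultaneously conjugate, so there is a single class. (What remains---and what the paper also leaves implicit---is that the marked points of $C$ lying in non-critical fibres contribute no further invariant: point-pushes in $\Mod(B)$ lift to homeomorphisms of the source permuting each fibre by the monodromy, which acts transitively since the covering is connected, so all labellings fall into one orbit. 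If you rewrite your third step along these lines, the argument closes.)
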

\begin{proof}
  In considering the action of $\Mod(B)\times\Mod(C)$, we are in effect ignoring the condition that $A$ is an invariant $n$-cycle; that is, we are considering the space of coverings of $(S^2,B)$ with two order-$d$ critical values. Hurwitz's theory~\cite{hurwitz:ramifiedsurfaces} classifies its connected components as follows: there is then a bijection between, on the one hand, isomorphism classes of coverings with two order-$d$ critical values $b_1,b_2$; and on the other hand orbits of pairs $(\rho_1,\rho_2)$ of $d$-cycles in $\operatorname{Sym}(d)$ with $\rho_1\rho_2=1$, under the diagonal action of $\operatorname{Sym}(d)$ by conjugation. There is evidently a single such orbit, since all $d$-cycles are conjugate.
\end{proof}

(As mentioned in the sketch, this fact holds more generally for any portrait with a critical value of maximal order, by~\cite{thom:polynome}*{Th\'eor\`eme (1)}. Imposing that this critical value be fixed defines a polynomial slice.)

Let us now fix a polynomial map $f\in\mathscr P_{d,n}$, assuming $B=C$. We consider two subgroups of $E$: the diagonal $\Delta=\{(g,g):g\in\Mod(B)\}$, which acts on $W$ by conjugation, and $P=\ker(\varepsilon_B)$, the ``point pushes'' of the critical value, which act on $W$ by post-composition. We have $P=\pi_1(S^2\setminus A,b)$, and Birman's exact sequence~\cite{birman:ses} gives
\[\begin{tikzcd}1\arrow[r] & P\arrow[r] & \Mod(B)\arrow[r,"\varepsilon_B"] & \Mod(A)\arrow[r] & 1.\end{tikzcd}\]
It suffices to show that every element of $W$ may be written $p g f g^{-1}$ with $g\in\Mod(B)$ and $p\in P$. Now this is precisely Corollary~\ref{cor:conj}.

\begin{bibsection}
  \begin{biblist}
    \bibselect{math}
  \end{biblist}
\end{bibsection}

\end{document}